\newtheorem{theorem}{Theorem}[section]
\numberwithin{equation}{section}
\title{Four New Generalized Fibonacci Number\\ Summation Identities}
\author{M.J. Kronenburg}
\date{}
\begin{document}

\maketitle

\begin{abstract}
Two new generalized Fibonacci number summation identities are stated and proved,
and two other new generalized Fibonacci number summation identities are derived from these,
of which two special cases are already known in literature.
\end{abstract}

\noindent
\textbf{Keywords}: Fibonacci number, Lucas number, generalized Fibonacci number.\\
\textbf{MSC 2010}: 11B39

\section{Four New Generalized Fibonacci Number\\ Summation Identities}

Let $F_n$ be the Fibonacci number, $L_n$ be the Lucas number
and $G_n$ be the generalized Fibonacci number, for which $G_{n+2}=G_{n+1}+G_n$ with
any seed $G_0$ and $G_1$ \cite{BQ03,CW00,V89}.
The two new generalized Fibonacci number summation identities are:
\begin{equation}\label{fibsum1}
 \sum_{k=0}^n \binom{n}{k} (-1)^k F_m^k F_{m+q}^{n-k} G_{p+qk} = (-1)^{nm} F_q^n G_{p-nm}
\end{equation}
\begin{equation}\label{fibsum2}
 \sum_{k=0}^n \binom{n}{k} (-1)^k L_m^k L_{m+q}^{n-k} G_{p+qk}
   = 5^{\lfloor n/2\rfloor} (-1)^{n(m+1)} F_q^n [ G_{p-nm+1} - (-1)^n G_{p-nm-1} ]
\end{equation}
where $\lfloor x\rfloor$ is the greatest integer less than or equal to $x$,
also called the floor of $x$ \cite{GKP94}.
Replacing $m$ by $-m$ and $q$ by $-q$ and using $F_{-n}=(-1)^{n+1}F_n$ and $L_{-n}=(-1)^nL_n$
these two identities transform into the following two identities:
\begin{equation}\label{fibsum3}
 \sum_{k=0}^n \binom{n}{k} (-1)^{(q+1)k} F_m^k F_{m+q}^{n-k} G_{p-qk} = F_q^n G_{p+nm}
\end{equation}
\begin{equation}\label{fibsum4}
 \sum_{k=0}^n \binom{n}{k} (-1)^{(q+1)k} L_m^k L_{m+q}^{n-k} G_{p-qk}
 = 5^{\lfloor n/2\rfloor} F_q^n [ G_{p+nm+1} - (-1)^n G_{p+nm-1} ] 
\end{equation}
The special cases $q=1$ and $q=-1$ of (\ref{fibsum3}) were already known in literature \cite{BQ03,CW00,V89}.

\section{Proof of the New Fibonacci Summation Identities}

The Fibonacci number summation identities (\ref{fibsum1}) and (\ref{fibsum2}) are proved for $G=F$
by first proving the $n=1$ case, then proving a recurrence relation
for the left side, and then showing that the right side fulfills
the same recurrence relation,
thus proving these identities by induction on $n$.
Then the identities are generalized to $G$ instead of $F$.
The following Binet formulas for $F_n$ and $L_n$ are well known \cite{BQ03,CW00,GKP94,V89}:
\begin{equation}
 \phi = \frac{1}{2} ( 1 + \sqrt{5} )
\end{equation}
\begin{equation}\label{fibodef}
 F_n = \frac{1}{\sqrt{5}} ( \phi^n - (1-\phi)^n )
\end{equation}
\begin{equation}\label{lucasdef}
 L_n = \phi^n + (1-\phi)^n
\end{equation}
First the following well known Fibonacci identity is needed \cite{BQ03,CW00,GKP94,V89}.
\begin{theorem}
\begin{equation}
 F_{m+1}F_n + F_mF_{n-1} = F_{n+m}
\end{equation}
\end{theorem}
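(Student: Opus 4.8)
The plan is to prove this identity directly from the Binet formula (\ref{fibodef}), since that is the tool the paper has already introduced. Write $\psi = 1-\phi$ for brevity; the two facts about the golden ratio that will do all the work are that $\phi$ and $\psi$ are the roots of $x^2 = x+1$, so that $\phi\psi = -1$ and $\phi^2 = \phi+1$, $\psi^2 = \psi+1$.

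First I would substitute $F_n = (\phi^n - \psi^n)/\sqrt{5}$ into the left-hand side and expand the two products, obtaining a sum of eight terms over a common denominator $5$. Four of these are ``pure'' terms in $\phi$ or $\psi$ alone, namely $\phi^{m+n+1}+\phi^{m+n-1}$ and $\psi^{m+n+1}+\psi^{m+n-1}$ with appropriate signs, and the other four are mixed terms of the form $\phi^a\psi^b$.

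For the pure terms, I would factor $\phi^{m+n-1}$ out of the first pair to get $\phi^{m+n-1}(\phi^2+1)$; using $\phi^2=\phi+1$ together with $\phi+2=\sqrt{5}\,\phi$ (which follows from $\phi=(1+\sqrt{5})/2$) this becomes $\sqrt{5}\,\phi^{m+n}$, and analogously the $\psi$-pair becomes $-\sqrt{5}\,\psi^{m+n}$; dividing by $5$ these contribute exactly $F_{m+n}$. For the mixed terms, I would pair $\phi^{m+1}\psi^{n}$ with $\phi^{m}\psi^{n-1}$ and factor out $\phi^{m}\psi^{n-1}$, leaving the factor $\phi\psi+1=0$; the same cancellation kills the other mixed pair. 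Hence all mixed terms drop out and the left-hand side collapses to $F_{m+n}$.

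There is no serious obstacle here: the computation is routine once the relations $\phi\psi=-1$ and $\phi^2+1=\sqrt{5}\,\phi$ (with its analogue $\psi^2+1=-\sqrt{5}\,\psi$) are in hand, and the only care needed is the bookkeeping of signs and exponents during the expansion. One could equally well argue by induction on $m$ using $F_{m+1}=F_m+F_{m-1}$ and the base cases $m=0,1$, but the Binet computation is shorter and fits the machinery already set up in the paper.
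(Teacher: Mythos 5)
Your proof is correct and takes essentially the same route as the paper: substitute the Binet formula and expand the product into pure and mixed terms. The only (cosmetic) difference is in the bookkeeping of those terms --- the paper combines the pure terms via $L_{n+m-1}+L_{n+m+1}=5F_{n+m}$ and shows the mixed terms cancel after rewriting them as Lucas numbers, whereas you pair the mixed terms so that the factor $\phi\psi+1=0$ kills them immediately and use $\phi^2+1=\sqrt{5}\,\phi$, $\psi^2+1=-\sqrt{5}\,\psi$ on the pure terms; both computations are routine and correct.
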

\begin{proof}
Substituting formula (\ref{fibodef}) and using $\phi(1-\phi)=-1$ and $L_{n-1}+L_{n+1}=5F_n$:
\begin{equation}
\begin{split}
 & \frac{1}{5} [ ( \phi^{m+1}-(1-\phi)^{m+1} ) ( \phi^n-(1-\phi)^n ) \\
 & + ( \phi^m-(1-\phi)^m ) ( \phi^{n-1}-(1-\phi)^{n-1} ) ] \\
 = & \frac{1}{5} [ \phi^{n+m+1} + (1-\phi)^{n+m+1} - \phi^{m+1}(1-\phi)^n - (1-\phi)^{m+1}\phi^n \\
   & + \phi^{n+m-1} + (1-\phi)^{n+m-1} - \phi^m(1-\phi)^{n-1} - (1-\phi)^m\phi^{n-1} ] \\
 = & \frac{1}{5} [ L_{n+m+1} + L_{n+m-1} - (-1)^{m+1} L_{n-m-1} - (-1)^m L_{n-m-1} ] \\
 = & F_{n+m} \\
\end{split}
\end{equation}
\end{proof}
Substituting $-n$ for $n$ and using $F_{-n}=(-1)^{n+1}F_n$ this identity becomes:
\begin{equation}
 F_{n+1}F_m - F_nF_{m+1} = (-1)^n F_{m-n}
\end{equation}
Adding and subtracting these two identities and using $F_{n-1}+F_{n+1}=L_n$ and\\
$F_{n-1}-F_{n+1}=-F_n$ and $F_{m+1}-F_m=F_{m-1}$ and $F_{m-1}+F_{m+1}=L_m$ yields:
\begin{equation}
 F_{m+n}+(-1)^nF_{m-n} = L_nF_m
\end{equation}
\begin{equation}\label{lemmaf}
 F_{m+n}-(-1)^nF_{m-n} = F_n L_m
\end{equation}
Adding the identity with $m-1$ and $m+1$ and using
$F_{m-1}+F_{m+1}=L_m$ and $L_{m-1}+L_{m+1}=5F_m$ yields two similar identities:
\begin{equation}
 L_{m+n}+(-1)^nL_{m-n} = L_nL_m
\end{equation}
\begin{equation}\label{lemmal}
 L_{m+n}-(-1)^nL_{m-n} = 5 F_n F_m
\end{equation}
These last four identities were already known in literature \cite{BQ03,CW00,V89}.
Equations (\ref{lemmaf}) and (\ref{lemmal}) are used in the proof below.
\begin{theorem}
The following identity is the $n=1$ case of (\ref{fibsum1}):
\begin{equation}\label{case1f}
 F_{m+q}F_p - F_m F_{p+q} = (-1)^m F_q F_{p-m}
\end{equation}
\end{theorem}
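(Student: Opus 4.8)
The plan is to follow the same strategy as the proof of Theorem~1.1: substitute the Binet formula (\ref{fibodef}) for every Fibonacci number occurring in (\ref{case1f}) and simplify using $\phi(1-\phi)=-1$. Writing $\psi=1-\phi$ for brevity, the left-hand side of (\ref{case1f}) becomes $\tfrac{1}{5}\bigl[(\phi^{m+q}-\psi^{m+q})(\phi^p-\psi^p)-(\phi^m-\psi^m)(\phi^{p+q}-\psi^{p+q})\bigr]$. Expanding the two products, the ``diagonal'' terms $\phi^{m+p+q}$ and $\psi^{m+p+q}$ occur with coefficient $+1$ in each product and therefore cancel in the difference, leaving only the four mixed terms $-\phi^{m+q}\psi^p-\phi^p\psi^{m+q}+\phi^m\psi^{p+q}+\phi^{p+q}\psi^m$.

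The key algebraic step is to factor this remainder. Pairing the first term with the third and the second with the fourth gives $\phi^m\psi^p(\psi^q-\phi^q)+\phi^p\psi^m(\phi^q-\psi^q)=(\phi^q-\psi^q)(\phi^p\psi^m-\phi^m\psi^p)$, and then $\phi\psi=-1$ lets me write $\phi^p\psi^m-\phi^m\psi^p=(\phi\psi)^m(\phi^{p-m}-\psi^{p-m})=(-1)^m(\phi^{p-m}-\psi^{p-m})$, which is valid for all integers $p,m$ (negative exponents of the nonzero numbers $\phi,\psi$ cause no trouble, and correspond exactly to the rule $F_{-k}=(-1)^{k+1}F_k$). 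Reassembling, the left-hand side equals $\tfrac{(-1)^m}{5}(\phi^q-\psi^q)(\phi^{p-m}-\psi^{p-m})=(-1)^m\cdot\tfrac{\phi^q-\psi^q}{\sqrt5}\cdot\tfrac{\phi^{p-m}-\psi^{p-m}}{\sqrt5}=(-1)^mF_qF_{p-m}$, which is the assertion.

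I expect the only real obstacle to be sign bookkeeping in the four mixed terms, together with spotting the factorization $\phi^p\psi^m-\phi^m\psi^p=(-1)^m(\phi^{p-m}-\psi^{p-m})$; this is precisely the place where $\phi(1-\phi)=-1$ does the work, just as in Theorem~1.1, and after that the computation is mechanical, with no induction or case analysis. A Binet-free variant is also available: from (\ref{lemmaf}) and its companion $F_{m+n}+(-1)^nF_{m-n}=L_nF_m$ one obtains $2F_{m+q}=L_qF_m+F_qL_m$, $2F_{p+q}=L_qF_p+F_qL_p$, and $2(-1)^mF_{p-m}=L_mF_p-F_mL_p$; inserting the first two into the left-hand side of (\ref{case1f}) makes the $L_qF_mF_p$ terms cancel, leaving $\tfrac{1}{2}F_q(L_mF_p-F_mL_p)=(-1)^mF_qF_{p-m}$. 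I would present whichever version better fits the surrounding exposition, but the Binet computation is entirely self-contained.
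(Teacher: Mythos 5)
Your proof is correct and takes essentially the same approach as the paper: substitute the Binet formula (\ref{fibodef}), cancel the diagonal terms, and simplify the four mixed terms using $\phi(1-\phi)=-1$. The only (cosmetic) difference is in the endgame: you factor the mixed terms directly as $(\phi^q-(1-\phi)^q)\bigl(\phi^p(1-\phi)^m-\phi^m(1-\phi)^p\bigr)$ and pull out $(-1)^m$ via $(\phi(1-\phi))^m$, whereas the paper converts them into $L_{m-p+q}$ and $L_{m-p-q}$ and then invokes identity (\ref{lemmal}) together with $F_{-n}=(-1)^{n+1}F_n$.
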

\begin{proof}
Substituting formula (\ref{fibodef}) and using $\phi(1-\phi)=-1$ and (\ref{lemmal}) and\\
$F_{-n}=(-1)^{n+1}F_n$:
\begin{equation}
\begin{split}
   & \frac{1}{5} [ ( \phi^p-(1-\phi)^p ) ( \phi^{m+q}-(1-\phi)^{m+q} ) \\
   & - ( \phi^{p+q}-(1-\phi)^{p+q} ) ( \phi^m-(1-\phi)^m ) ] \\
 = & \frac{1}{5} [ \phi^{m+p+q} + (1-\phi)^{m+p+q} - \phi^p(1-\phi)^{m+q} - (1-\phi)^p\phi^{m+q} \\ 
   & - ( \phi^{m+p+q} + (1-\phi)^{m+p+q} - \phi^{p+q}(1-\phi)^m - (1-\phi)^{p+q}\phi^m ) ] \\
 = & -\frac{1}{5} [ (-1)^p L_{m-p+q} - (-1)^{p+q} L_{m-p-q} ] \\
 = & -\frac{1}{5} (-1)^p [ L_{m-p+q} - (-1)^q L_{m-p-q} ] \\
 = & -(-1)^p F_q F_{m-p} \\
 = & (-1)^m F_q F_{p-m} \\
\end{split}
\end{equation}
\end{proof}
\begin{theorem}
The following identity is the $n=1$ case of (\ref{fibsum2}):
\begin{equation}\label{case1l}
 L_{m+q}F_p - L_m F_{p+q} = (-1)^{m+1} F_q L_{p-m}
\end{equation}
\end{theorem}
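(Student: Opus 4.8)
The plan is to prove (\ref{case1l}) in direct analogy with the proof of (\ref{case1f}) just given: substitute the Binet formulas (\ref{fibodef}) and (\ref{lucasdef}) into the left-hand side, expand the two products $L_{m+q}F_p$ and $L_mF_{p+q}$, and simplify. After expansion the pure powers $\phi^{m+p+q}$ and $(1-\phi)^{m+p+q}$ cancel between the two products, leaving four mixed terms of the form $\phi^a(1-\phi)^b$. Using $\phi(1-\phi)=-1$ I would collapse each mixed term to $\pm$ a single power of $\phi$ or $1-\phi$, chosen so that the four terms regroup into two Fibonacci numbers: the first two terms become $-(-1)^p\sqrt{5}\,F_{m-p+q}$ and the last two become $-(-1)^m\sqrt{5}\,F_{p+q-m}$, so that the $1/\sqrt{5}$ in front cancels and the left-hand side equals $-(-1)^pF_{m-p+q}-(-1)^mF_{p+q-m}$.

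Next I would normalize the indices with $F_{-n}=(-1)^{n+1}F_n$, rewriting $F_{p+q-m}=(-1)^{m+p+q+1}F_{m-p-q}$, which turns the expression into $-(-1)^p\big[F_{m-p+q}-(-1)^qF_{m-p-q}\big]$. The bracket is precisely the left-hand side of the already-established identity (\ref{lemmaf}) with $m$ replaced by $m-p$ and $n$ replaced by $q$, namely $F_{m-p+q}-(-1)^qF_{m-p-q}=F_qL_{m-p}$. Substituting gives $-(-1)^pF_qL_{m-p}$, and a final use of $L_{-n}=(-1)^nL_n$ to replace $L_{m-p}$ by $(-1)^{m+p}L_{p-m}$ produces $(-1)^{m+1}F_qL_{p-m}$, as claimed.

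The only real difficulty here is bookkeeping rather than ideas: one must apply $\phi^a(1-\phi)^b=(-1)^b\phi^{a-b}$ (and its mirror image) with consistent choices so that the two surviving pairs really do assemble into Fibonacci numbers, and one must carry the sign exponents $p$, $m$, $p+q$ and $m+p+q$ correctly through the two index reflections. This is the Lucas-weighted counterpart of the computation done for (\ref{case1f}): there a product of two $F$'s produced Lucas numbers and (\ref{lemmal}) was used, whereas here the product $L\cdot F$ produces Fibonacci numbers and (\ref{lemmaf}) is used instead; no genuinely new step is needed.
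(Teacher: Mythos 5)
Your proposal is correct and follows essentially the same route as the paper: substitute the Binet formulas, cancel the pure powers $\phi^{m+p+q}$ and $(1-\phi)^{m+p+q}$, collapse the mixed terms via $\phi(1-\phi)=-1$ into $-(-1)^p[F_{m-p+q}-(-1)^qF_{m-p-q}]$, apply (\ref{lemmaf}), and reflect the index with $L_{-n}=(-1)^nL_n$. The only difference is an inessential bookkeeping choice in which sign exponent you extract from the second pair of mixed terms before normalizing.
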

\begin{proof}
Substituting formula (\ref{fibodef}) and (\ref{lucasdef}) and using $\phi(1-\phi)=-1$ and (\ref{lemmaf})
and $L_{-n}=(-1)^nL_n$:
\begin{equation}
\begin{split}
   & \frac{1}{\sqrt{5}} [ ( \phi^p-(1-\phi)^p ) ( \phi^{m+q}+(1-\phi)^{m+q} ) \\
   & - ( \phi^{p+q}-(1-\phi)^{p+q} ) ( \phi^m+(1-\phi)^m ) ] \\
 = & \frac{1}{\sqrt{5}} [ \phi^{m+p+q} - (1-\phi)^{m+p+q} + \phi^p(1-\phi)^{m+q} - (1-\phi)^p\phi^{m+q} \\ 
   & - ( \phi^{m+p+q} - (1-\phi)^{m+p+q} + \phi^{p+q}(1-\phi)^m - (1-\phi)^{p+q}\phi^m ) ] \\
 = & - [ (-1)^p F_{m-p+q} - (-1)^{p+q} F_{m-p-q} ] \\
 = & - (-1)^p [ F_{m-p+q} - (-1)^q F_{m-p-q} ] \\
 = & -(-1)^p F_q L_{m-p} \\
 = & (-1)^{m+1} F_q L_{p-m} \\
\end{split}
\end{equation}
\end{proof}
There are variants of (\ref{case1f}) and (\ref{case1l}) obtained by adding the
identity with $p-1$ and $p+1$ and using $F_{p-1}+F_{p+1}=L_p$ and $L_{p-1}+L_{p+1}=5F_p$:
\begin{equation}\label{case1fl}
 F_{m+q}L_p - F_m L_{p+q} = (-1)^m F_q L_{p-m}
\end{equation}
\begin{equation}\label{case1ll}
 L_{m+q}L_p - L_m L_{p+q} = 5 (-1)^{m+1} F_q F_{p-m}
\end{equation}

\begin{theorem}
Let $S(n,m,p,q)$ be the left side of identity (\ref{fibsum1}), then:
\begin{equation}
 S(n,m,p,q) = F_{m+q} S(n-1,m,p,q) - F_m S(n-1,m,p+q,q)
\end{equation}
\end{theorem}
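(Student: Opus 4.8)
The plan is to obtain the recurrence directly from Pascal's rule, with no appeal to the Binet formulas. Writing $\binom{n}{k}=\binom{n-1}{k}+\binom{n-1}{k-1}$ in the defining sum gives
\[
 S(n,m,p,q)=\sum_{k=0}^n\Bigl[\binom{n-1}{k}+\binom{n-1}{k-1}\Bigr](-1)^kF_m^kF_{m+q}^{n-k}G_{p+qk},
\]
and I would split the right side into two sums $A$ and $B$ corresponding to the two binomial terms.

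In $A$ the term $k=n$ vanishes because $\binom{n-1}{n}=0$, so $A$ runs over $0\le k\le n-1$; peeling one factor off $F_{m+q}^{n-k}=F_{m+q}\,F_{m+q}^{(n-1)-k}$ identifies $A$ with $F_{m+q}\,S(n-1,m,p,q)$ straight from the definition of $S$. In $B$ the term $k=0$ vanishes because $\binom{n-1}{-1}=0$; substituting $j=k-1$ and using $(-1)^k=-(-1)^j$, $F_m^k=F_m\,F_m^j$, $F_{m+q}^{n-k}=F_{m+q}^{(n-1)-j}$ and $G_{p+qk}=G_{(p+q)+qj}$ identifies $B$ with $-F_m\,S(n-1,m,p+q,q)$. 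Adding $A$ and $B$ yields the asserted identity.

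There is no real obstacle here beyond index bookkeeping: the two things to verify are that the boundary terms $k=n$ in $A$ and $k=0$ in $B$ really drop out, and that the shift $k\mapsto k-1$ reproduces a copy of $S$ with its third argument raised from $p$ to $p+q$. It is worth noting that this computation is purely formal in the values $G_{p+qk}$ — it never uses the recurrence $G_{n+2}=G_{n+1}+G_n$ — so the identical argument will give the companion recurrence for the left side of (\ref{fibsum2}) with $L_m,L_{m+q}$ in place of $F_m,F_{m+q}$, which is presumably the next step of the paper's induction scheme.
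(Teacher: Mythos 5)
Your proof is correct and is essentially the paper's own argument read in the opposite direction: the paper starts from $F_{m+q}S(n-1,m,p,q)-F_mS(n-1,m,p+q,q)$, shifts the index in the second sum, and recombines via $\binom{n-1}{k}+\binom{n-1}{k-1}=\binom{n}{k}$ (treating $k=0$ and $k=n$ separately), which is exactly your splitting of $S(n,m,p,q)$ into the sums $A$ and $B$. Your closing observation that the argument is purely formal in $G_{p+qk}$ and transfers verbatim to the Lucas-coefficient recurrence matches the paper's remark that the proof of the companion recurrence ``is similar.''
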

\begin{proof}
\begin{equation}
\begin{split}
 & F_{m+q} \sum_{k=0}^{n-1} \binom{n-1}{k}(-1)^k F_m^k F_{m+q}^{n-k-1} F_{p+qk} \\
 & - F_m \sum_{k=0}^{n-1} \binom{n-1}{k}(-1)^k F_m^k F_{m+q}^{n-k-1} F_{p+q(k+1)} \\
 = & F_{m+q} \sum_{k=0}^{n-1} \binom{n-1}{k}(-1)^k F_m^k F_{m+q}^{n-k-1} F_{p+qk} \\
 & - F_m \sum_{k=1}^n \binom{n-1}{k-1}(-1)^{k-1} F_m^{k-1} F_{m+q}^{n-k} F_{p+qk} \\
 = & \sum_{k=0}^{n-1} \binom{n-1}{k}(-1)^k F_m^k F_{m+q}^{n-k} F_{p+qk} \\
 & + \sum_{k=1}^n \binom{n-1}{k-1}(-1)^k F_m^k F_{m+q}^{n-k} F_{p+qk} \\
\end{split}
\end{equation}
For $0<k<n$ the binomial coefficient addition formula \cite{GKP94} is used:
\begin{equation}
 \binom{n-1}{k} + \binom{n-1}{k-1} = \binom{n}{k}
\end{equation}
and for $k=0$ and $k=n$ separately the theorem is proved.
\end{proof}
When $S(n,m,p,q)$ is the left side of identity (\ref{fibsum2}) the theorem is:
\begin{equation}
 S(n,m,p,q) = L_{m+q} S(n-1,m,p,q) - L_m S(n-1,m,p+q,q)
\end{equation}
and the proof is similar.
When $S(n,m,p,q)$ is the right side of identities (\ref{fibsum1}) and (\ref{fibsum2}) the
same recurrence relations are fulfilled. 
\begin{theorem}
 Let $S(n,m,p,q)$ be the right side of (\ref{fibsum1}), then also:
\begin{equation}
 S(n,m,p,q) = F_{m+q} S(n-1,m,p,q) - F_m S(n-1,m,p+q,q)
\end{equation}
\end{theorem}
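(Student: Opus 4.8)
The plan is to insert the explicit closed form for the right side of (\ref{fibsum1}) (here with $G=F$), namely $S(n,m,p,q)=(-1)^{nm}F_q^nF_{p-nm}$, into both sides of the claimed recurrence and check equality by a short direct computation that collapses onto the already-proved $n=1$ identity (\ref{case1f}).

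First I would expand the right-hand side of the recurrence using this closed form and pull out the common factor $(-1)^{(n-1)m}F_q^{n-1}$:
\begin{equation*}
 F_{m+q}S(n-1,m,p,q)-F_mS(n-1,m,p+q,q)
  =(-1)^{(n-1)m}F_q^{n-1}\bigl[\,F_{m+q}F_{p-(n-1)m}-F_mF_{p+q-(n-1)m}\,\bigr].
\end{equation*}

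Then I would observe that the bracketed expression is exactly an instance of (\ref{case1f}) with $p$ replaced by $p-(n-1)m$: since $F_{m+q}F_p-F_mF_{p+q}=(-1)^mF_qF_{p-m}$ holds for all indices, the substitution gives $F_{m+q}F_{p-(n-1)m}-F_mF_{p+q-(n-1)m}=(-1)^mF_qF_{p-nm}$. Substituting this back produces $(-1)^{(n-1)m}F_q^{n-1}\cdot(-1)^mF_qF_{p-nm}=(-1)^{nm}F_q^nF_{p-nm}=S(n,m,p,q)$, which is the left side, completing the proof.

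I do not expect any real obstacle here: the only things to be careful about are the exponent arithmetic, $(n-1)m+m=nm$ and $(n-1)+1=n$, and the index shift $p-(n-1)m-m=p-nm$. All the genuine content sits in the $n=1$ identity (\ref{case1f}); this theorem merely records that the closed form propagates that identity through the recurrence. Combined with the earlier theorem that the left side of (\ref{fibsum1}) satisfies the same recurrence and with the $n=1$ base case, this finishes the induction proving (\ref{fibsum1}) for $G=F$.
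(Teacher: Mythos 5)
Your proposal is correct and matches the paper's own proof essentially verbatim: both substitute the closed form $(-1)^{(n-1)m}F_q^{n-1}F_{p-(n-1)m}$ into the recurrence, factor out $(-1)^{(n-1)m}F_q^{n-1}$, and apply identity (\ref{case1f}) with $p$ shifted to $p-(n-1)m$ to obtain $(-1)^{nm}F_q^nF_{p-nm}$. Nothing to add.
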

\begin{proof}
Using equation (\ref{case1f}):
\begin{equation}
\begin{split}
   & F_{m+q} (-1)^{(n-1)m} F_q^{n-1} F_{p-(n-1)m} - F_m (-1)^{(n-1)m} F_q^{n-1} F_{p-(n-1)m+q} \\
 = & (-1)^{(n-1)m} F_q^{n-1} ( F_{m+q}F_{p-nm+m} - F_mF_{p-nm+m+q} ) \\
 = & (-1)^{nm} F_q^n F_{p-nm} \\
\end{split}
\end{equation}
\end{proof}

For identity (\ref{fibsum2}) the following is needed:
\begin{equation}\label{cases}
 F_{p-nm+1} - (-1)^n F_{p-nm-1} =
 \begin{cases}
   F_{p-nm} & \text{if $n$ is even} \\
   L_{p-nm} & \text{if $n$ is odd} \\
 \end{cases}
\end{equation}
\begin{theorem}
 Let $S(n,m,p,q)$ be the right side of (\ref{fibsum2}), then also:
\begin{equation}
 S(n,m,p,q) = L_{m+q} S(n-1,m,p,q) - L_m S(n-1,m,p+q,q)
\end{equation}
\end{theorem}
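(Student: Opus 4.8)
The plan is to verify the claimed recurrence directly, splitting into the two cases $n$ even and $n$ odd, since by (\ref{cases}) the right side $S(n,m,p,q)$ of (\ref{fibsum2}) with $G=F$ collapses to $5^{\lfloor n/2\rfloor}(-1)^{n(m+1)}F_q^n F_{p-nm}$ when $n$ is even and to $5^{\lfloor n/2\rfloor}(-1)^{n(m+1)}F_q^n L_{p-nm}$ when $n$ is odd; the same applies to $S(n-1,m,p,q)$ and $S(n-1,m,p+q,q)$, but with the opposite parity. Thus the two terms on the right side of the recurrence always involve one ``species'' of Fibonacci/Lucas number while the left side involves the other, and the proof amounts to matching the prefactors and invoking the appropriate $n=1$ identity.

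First suppose $n$ is even. Then $n-1$ is odd, $\lfloor(n-1)/2\rfloor=\lfloor n/2\rfloor-1$, and $(-1)^{(n-1)(m+1)}=(-1)^{m+1}$, so by (\ref{cases}) the terms $S(n-1,m,p,q)$ and $S(n-1,m,p+q,q)$ become $5^{\lfloor n/2\rfloor-1}(-1)^{m+1}F_q^{n-1}L_{p-nm+m}$ and $5^{\lfloor n/2\rfloor-1}(-1)^{m+1}F_q^{n-1}L_{p-nm+m+q}$ respectively. Pulling the common factor $5^{\lfloor n/2\rfloor-1}(-1)^{m+1}F_q^{n-1}$ out of $L_{m+q}S(n-1,m,p,q)-L_mS(n-1,m,p+q,q)$ leaves the combination $L_{m+q}L_{p-nm+m}-L_mL_{p-nm+m+q}$, which by (\ref{case1ll}) with $p$ replaced by $p-nm+m$ equals $5(-1)^{m+1}F_qF_{p-nm}$; multiplying back and using $(-1)^{m+1}(-1)^{m+1}=1$ gives $5^{\lfloor n/2\rfloor}F_q^n F_{p-nm}$, which is exactly $S(n,m,p,q)$ since $(-1)^{n(m+1)}=1$ for even $n$.

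Now suppose $n$ is odd. Then $n-1$ is even, $\lfloor(n-1)/2\rfloor=\lfloor n/2\rfloor$, and $(-1)^{(n-1)(m+1)}=1$, so by (\ref{cases}) the terms $S(n-1,m,p,q)$ and $S(n-1,m,p+q,q)$ become $5^{\lfloor n/2\rfloor}F_q^{n-1}F_{p-nm+m}$ and $5^{\lfloor n/2\rfloor}F_q^{n-1}F_{p-nm+m+q}$. Factoring $5^{\lfloor n/2\rfloor}F_q^{n-1}$ out of the right side of the recurrence leaves $L_{m+q}F_{p-nm+m}-L_mF_{p-nm+m+q}$, which by (\ref{case1l}) with $p$ replaced by $p-nm+m$ equals $(-1)^{m+1}F_qL_{p-nm}$; multiplying back gives $5^{\lfloor n/2\rfloor}(-1)^{m+1}F_q^nL_{p-nm}$, which is $S(n,m,p,q)$ since $(-1)^{n(m+1)}=(-1)^{m+1}$ for odd $n$.

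The main obstacle is simply the bookkeeping of the prefactors: one must track how $\lfloor n/2\rfloor$ and $(-1)^{n(m+1)}$ jump under a change of parity of $n$, and notice that in the even-$n$ case a factor of $5$ has to appear from nowhere, which is precisely why (\ref{case1ll}), whose right side carries a $5$, is the correct tool there, whereas in the odd-$n$ case no such factor appears and (\ref{case1l}) is used instead. Together with the $n=1$ base case (equation (\ref{case1l})) and the matching recurrence already established for the left side of (\ref{fibsum2}), this proves (\ref{fibsum2}) for $G=F$ by induction on $n$.
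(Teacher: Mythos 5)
Your proof is correct and follows essentially the same route as the paper: split on the parity of $n$, use (\ref{cases}) to identify $S(n-1,\cdot)$ as a Fibonacci or Lucas expression, and apply (\ref{case1l}) in the odd case and (\ref{case1ll}) in the even case, with the factor of $5$ in (\ref{case1ll}) supplying the jump in $5^{\lfloor n/2\rfloor}$. The bookkeeping of $\lfloor n/2\rfloor$ and the sign $(-1)^{n(m+1)}$ is handled correctly throughout.
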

\begin{proof}
When $n$ is odd and $n-1$ is even, using (\ref{case1l}) and (\ref{cases}):
\begin{equation}
\begin{split}
   & L_{m+q} 5^{\frac{n-1}{2}} F_q^{n-1} F_{p-(n-1)m} - L_m 5^{\frac{n-1}{2}} F_q^{n-1} F_{p-(n-1)m+q} \\
 = & 5^{\frac{n-1}{2}} F_q^{n-1} ( L_{m+q}F_{p-nm+m} - L_mF_{p-nm+m+q} ) \\
 = & (-1)^{m+1} 5^{\frac{n-1}{2}} F_q^n L_{p-nm} \\
\end{split}
\end{equation}
When $n$ is even and $n-1$ is odd, using (\ref{case1ll}) and (\ref{cases}):\\
\begin{equation}
\begin{split}
   & L_{m+q} 5^{\frac{n-2}{2}} (-1)^{m+1} F_q^{n-1} L_{p-(n-1)m} - L_m 5^{\frac{n-2}{2}} (-1)^{m+1} F_q^{n-1} L_{p-(n-1)m+q} \\
 = & 5^{\frac{n}{2}-1} (-1)^{m+1} F_q^{n-1} ( L_{m+q}L_{p-nm+m} - L_mL_{p-nm+m+q} ) \\
 = & 5^{\frac{n}{2}} F_q^n F_{p-nm} \\
\end{split}
\end{equation}
\end{proof}

Having proved the two idenitities for $G=F$, the following is used:
\begin{equation}
 G_n = \frac{1}{2} [ ( G_{-1}+G_1 ) F_n + G_0 L_n ]
\end{equation}
which is easily proved by checking it for $n=0$ and $n=1$ and $G_{n+2}=G_{n+1}+G_n$.
This means that $G_n$ is a linear combination of $F_n$ and $L_n$,
so it only needs to be checked that the identities hold for $G=F$ and $G=L$.
This is demonstrated by adding the identities for $p-1$ and $p+1$ and
using $F_{p-1}+F_{p+1}=L_p$.

\section{Some Examples}

The identities (\ref{fibsum1}) to (\ref{fibsum4}) with $n=1$ to $n=4$ result in
the following identities.\\
Let $F_n$ be the Fibonacci number, $L_n$ be the Lucas number
and $G_n$ be the generalized Fibonacci number, for which $G_{n+2}=G_{n+1}+G_n$ with
any seed $G_0$ and $G_1$ \cite{BQ03,CW00,V89}.\\
For integer $n$, $m$, $p$:
\begin{equation}\label{final1}
 F_{m+p} G_n - F_m G_{n+p} = (-1)^m F_p G_{n-m}
\end{equation}
\begin{equation}\label{final2}
 L_{m+p} G_n - L_m G_{n+p} = (-1)^{m+1} F_p ( G_{n-m+1} + G_{n-m-1} )
\end{equation}
\begin{equation}\label{final3}
 F_{m+p} G_n - (-1)^p F_m G_{n-p} = F_p G_{n+m}
\end{equation}
\begin{equation}\label{final4}
 L_{m+p} G_n - (-1)^p L_m G_{n-p} = F_p ( G_{n+m+1} + G_{n+m-1} )
\end{equation}
Identities (\ref{final1}) and (\ref{final2}) with $G=F$ and $G=L$ are equivalent
to identities (19a), (19b), (20a) and (20b) in \cite{V89},
and the well known identity \cite{BQ03,CW00,GKP94,V89}:
\begin{equation}
 F_{m+1}F_n + F_mF_{n-1} = F_{n+m}
\end{equation}
is identity (\ref{final3}) with $G=F$ and $p=1$.\\
For integer $n$, $m$, $p$:
\begin{equation}
 F_{m+p}^2 G_n - 2 F_m F_{m+p} G_{n+p} + F_m^2 G_{n+2p} = F_p^2 G_{n-2m}
\end{equation}
\begin{equation}
 L_{m+p}^2 G_n - 2 L_m L_{m+p} G_{n+p} + L_m^2 G_{n+2p} = 5 F_p^2 ( G_{n-2m+1} - G_{n-2m-1} )
\end{equation}
\begin{equation}
 F_{m+p}^2 G_n - 2 (-1)^p F_m F_{m+p} G_{n-p} + F_m^2 G_{n-2p} = F_p^2 G_{n+2m}
\end{equation}
\begin{equation}
 L_{m+p}^2 G_n - 2 (-1)^p L_m L_{m+p} G_{n-p} + L_m^2 G_{n-2p} = 5 F_p^2 ( G_{n+2m+1} - G_{n+2m-1} )
\end{equation}
For integer $n$, $m$, $p$:
\begin{equation}
 F_{m+p}^3 G_n - 3 F_m F_{m+p}^2 G_{n+p} + 3 F_m^2 F_{m+p} G_{n+2p} - F_m^3 G_{n+3p} = (-1)^m F_p^3 G_{n-3m}
\end{equation}
\begin{equation}
\begin{split}
 & L_{m+p}^3 G_n - 3 L_m L_{m+p}^2 G_{n+p} + 3 L_m^2 L_{m+p} G_{n+2p} - L_m^3 G_{n+3p} \\
 & = 5 (-1)^{m+1} F_p^3 ( G_{n-3m+1} + G_{n-3m-1} ) \\
\end{split}
\end{equation}
\begin{equation}
\begin{split}
 & F_{m+p}^3 G_n - 3 (-1)^p F_m F_{m+p}^2 G_{n-p} + 3 F_m^2 F_{m+p} G_{n-2p} - (-1)^p F_m^3 G_{n-3p} \\
 & = F_p^3 G_{n+3m} \\
\end{split}
\end{equation}
\begin{equation}
\begin{split}
 & L_{m+p}^3 G_n - 3 (-1)^p L_m L_{m+p}^2 G_{n-p} + 3 L_m^2 L_{m+p} G_{n-2p} - (-1)^p L_m^3 G_{n-3p} \\
 & = 5 F_p^3 ( G_{n+3m+1} + G_{n+3m-1} ) \\
\end{split}
\end{equation}
For integer $n$, $m$, $p$:
\begin{equation}
\begin{split}
 & F_{m+p}^4 G_n - 4 F_m F_{m+p}^3 G_{n+p} + 6 F_m^2 F_{m+p}^2 G_{n+2p} - 4 F_m^3 F_{m+p} G_{n+3p} \\
 & + F_m^4 G_{n+4p} = F_p^4 G_{n-4m} \\
\end{split}
\end{equation}
\begin{equation}
\begin{split}
 & L_{m+p}^4 G_n - 4 L_m L_{m+p}^3 G_{n+p} + 6 L_m^2 L_{m+p}^2 G_{n+2p} - 4 L_m^3 L_{m+p} G_{n+3p} \\ 
 & + L_m^4 G_{n+4p} = 25 F_p^4 ( G_{n-4m+1} - G_{n-4m-1} ) \\
\end{split}
\end{equation}
\begin{equation}
\begin{split}
 & F_{m+p}^4 G_n - 4 (-1)^p F_m F_{m+p}^3 G_{n-p} + 6 F_m^2 F_{m+p}^2 G_{n-2p} \\
 & - 4 (-1)^p F_m^3 F_{m+p} G_{n-3p} + F_m^4 G_{n-4p} = F_p^4 G_{n+4m} \\
\end{split}
\end{equation}
\begin{equation}
\begin{split}
 & L_{m+p}^4 G_n - 4 (-1)^p L_m L_{m+p}^3 G_{n-p} + 6 L_m^2 L_{m+p}^2 G_{n-2p} \\
 & - 4 (-1)^p L_m^3 L_{m+p} G_{n-3p} + L_m^4 G_{n-4p} = 25 F_p^4 ( G_{n+4m+1} - G_{n+4m-1} ) \\
\end{split}
\end{equation}

\pdfbookmark[0]{References}{}


\begin{thebibliography}{99}
\bibitem{BQ03}
  A.T. Benjamin, J.J. Quinn,
  \textit{Proofs that Really Count},
  The Mathematical Association of America, 2003.
\bibitem{CW00}
P. Chandra, E.W. Weisstein, \textit{Fibonacci Number}.
From Mathworld - A Wolfram Web Resource.
\href{http://mathworld.wolfram.com/FibonacciNumber.html}
{{\tt http://mathworld.wolfram.com/FibonacciNumber.html}}
\bibitem{GKP94}
  R.L. Graham, D.E. Knuth, O. Patashnik,
  \textit{Concrete Mathematics, A Foundation for Computer Science}, 2nd ed.,
  Addison-Wesley, 1994.
\bibitem{V89}
  S. Vajda,
  \textit{Fibonacci and Lucas Numbers, and the Golden Section},
  Dover Publications, 1989.
\end{thebibliography}
\end{document}